\newtheorem{prop}{Proposition}[section]
\newtheorem{lemma}[prop]{Lemma}
\newtheorem{theorem}[prop]{Theorem}
\DeclareMathOperator{\Dim}{Dim}
\DeclareMathOperator{\Poly}{Poly}
\DeclareMathOperator{\Sign}{Sign}
\newcommand{\ZZ}{\mathbb{Z}}
\newcommand{\RR}{\mathbb{R}}
\title{Polynomial partitioning for a set of varieties}
\author{Larry Guth}
\begin{document}

\begin{abstract} Given a set $\Gamma$ of low-degree k-dimensional varieties in $\RR^n$, we prove that for any $D \ge 1$, there is a non-zero polynomial $P$ of degree at most $D$ so that each component of $\RR^n \setminus Z(P)$ intersects $O(D^{k-n} |\Gamma|)$ varieties of $\Gamma$.

\end{abstract}

\maketitle

Recently polynomial partitioning has become a valuable technique in incidence geometry.  In particular the following partitioning theorem has had some important applications.

\begin{theorem} \label{polypartgk} (Theorem 4.1 in \cite{GK}) If $X$ is a finite set of points in $\RR^n$ and $D \ge 1$, then there is a non-zero polynomial $P$ of degree at most $D$ so that each component of $\RR^n \setminus Z(P)$ contains at most $C_n D^{-n} |X|$ points of $X$.
\end{theorem}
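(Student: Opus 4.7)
The plan is to prove this by iterative bisection, using the polynomial ham sandwich theorem as the main tool, following Stone-Tukey in the style applied by Guth-Katz.

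The key auxiliary statement I would establish first is a \emph{polynomial ham sandwich theorem}: given finite sets $X_1,\ldots,X_N$ in $\RR^n$, there is a non-zero polynomial $Q$ of degree at most $C_n N^{1/n}$ whose zero set bisects each $X_i$, meaning $|X_i \cap \{Q>0\}| \le |X_i|/2$ and $|X_i \cap \{Q<0\}| \le |X_i|/2$. This follows by choosing $d = \lceil (C_n N^{1/n}) \rceil$ so that the space of polynomials of degree $\le d$ has dimension $M+1 > N$, normalizing polynomials to lie on $S^M$, and applying Borsuk--Ulam to the odd map $S^M \to \RR^N$ sending $Q$ to the vector whose $i$-th entry is $|X_i \cap \{Q>0\}| - |X_i \cap \{Q<0\}|$. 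A zero of this map is a polynomial whose zero set bisects all the $X_i$ simultaneously.

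Given this lemma, I would build $P$ as a product $P = P_1 P_2 \cdots P_J$ as follows. At stage $j$, the signs of $(P_1,\ldots,P_{j-1})$ partition $X$ into at most $2^{j-1}$ cells; I apply the polynomial ham sandwich lemma to these $2^{j-1}$ subsets of $X$ to produce a polynomial $P_j$ of degree at most $C_n 2^{(j-1)/n}$ that bisects each of them. After stage $j$, the sign vector $(\Sign P_1, \ldots, \Sign P_j)$ partitions the points of $X$ away from $Z(P_1 \cdots P_j)$ into at most $2^j$ subsets, each of size at most $2^{-j}|X|$, since each stage halves every subset.

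Now I choose the stopping index $J$ to be the smallest integer with $2^J \ge D^n$, so $J \sim n\log_2 D$. The total degree is $\sum_{j=1}^{J} C_n 2^{(j-1)/n}$, which is a geometric series dominated by its last term and is therefore at most $C_n' \, 2^{J/n} \le C_n'' D$. By rescaling the constant (or padding $P$ with a generic linear factor), I may assume the degree is at most $D$. Each connected component of $\RR^n \setminus Z(P)$ lies in a single sign cell of $(P_1,\ldots,P_J)$ and hence contains at most $2^{-J}|X| \le 2 D^{-n}|X|$ points of $X$, giving the desired conclusion with $C_n = 2$ after the rescaling.

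The main obstacle is the polynomial ham sandwich lemma: one must verify that the Veronese-style map of signed discrepancies is continuous (or at least has no antipodal zero-free behavior) on the sphere of polynomials, so that Borsuk--Ulam applies. The standard fix is to slightly perturb so the zero set avoids the points of $X$, or to interpret the signed count with a convention that makes the map odd and whose zero still corresponds to a bisecting polynomial. Once that subtlety is handled, the iterative halving and the geometric-series degree bound are routine.
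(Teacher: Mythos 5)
Your proposal is correct and is exactly the paper's own approach: Theorem \ref{polypartgk} is cited from \cite{GK}, and the paper's introductory sketch describes precisely your iterative Stone--Tukey ham sandwich bisection with Borsuk--Ulam underneath and the geometric-series degree bound. (The parenthetical ``padding $P$ with a generic linear factor'' would increase, not decrease, the degree, but your alternative of rescaling $D$ by a constant and absorbing it into $C_n$ --- equivalently, choosing $J$ maximal so that $\sum_{j\le J} C_n 2^{(j-1)/n}\le D$, which still gives $2^J\gtrsim_n D^n$ --- is the correct fix and is a cosmetic detail, not a gap.)
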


This theorem is a kind of equidistribution result.  Milnor \cite{M} and Thom \cite{T} proved that $\RR^n \setminus Z(P)$ has at most $C_n D^n$ connected components (see also Theorem \ref{numbercells} below).  If all the points of $X$ were in $\RR^n \setminus Z(P)$, then the conclusion of Theorem \ref{polypartgk} would imply that the points were roughly equidistributed among the components of $\RR^n \setminus Z(P)$.  It is important to note, however, that some or all of the points of $X$ are allowed to lie in $Z(P)$.  For example, if $X$ is a large set of points in a hyperplane in $\RR^n$, then $Z(P)$ could be that hyperplane.  

Katz and the author used Theorem \ref{polypartgk} in \cite{GK} to study the incidence geometry of lines in $\RR^3$, leading to new bounds for the distinct distance problem in the plane.  In \cite{KMS}, Kaplan, Matou\u{s}ek, and Sharir used it to reprove several classical theorems in incidence geometry, including the Szemer\'edi-Trotter theorem.  In \cite{ST}, Solymosi and Tao used it to study the incidence geometry of $k$-planes in $\RR^n$.  Theorem \ref{polypartgk} has been applied to other problems in incidence geometry by Sharir, Sheffer, and Zahl \cite{SSZ}, by Sharir and Solomon \cite{SS}, by Kaplan, Matou\u{s}ek, Safernov\'a, and Sharir \cite{KMSS}, and by Zahl \cite{Z}.  

In this paper, we consider a generalization of Theorem \ref{polypartgk}.  Instead of a finite set of points $X$, we consider a finite set of algebraic varieties.  For example, we may consider a set of lines, a set of $k$-planes, a set of circles, etc.  I don't have any immediate applications of this generalized partitioning theorem, but because of the many recent applications of Theorem \ref{polypartgk}, I hope that this generalization will also be useful in incidence geometry.

Suppose that $\Gamma$ is a set of $k$-dimensional varieties in $\RR^n$.  We would like to partition $\RR^n$ with a degree $D$ polynomial $P$ so that each component of $\RR^n \setminus Z(P)$ intersects only a small number of the varieties of $\Gamma$.   As a starting point, we consider a single variety $\gamma$, and we ask how many components of $\RR^n \setminus Z(P)$ the variety $\gamma$ can intersect.  This question was studied by Barone and Basu \cite{BB}.  Solymosi and Tao gave a nice exposition of a less precise result in the appendix of their paper \cite{ST}.

\begin{theorem} \label{numbercells} (Theorem A2 in \cite{ST}) Suppose $\gamma$ is a $k$-dimensional variety in $\RR^n$ defined by $m$ polynomial equations $p_j(x) = 0$ each of degree at most $d$.   If $P$ is a polynomial of degree at most $D$, then $\gamma$ intersects at most $C(d, m, n) D^k$ different components of $\RR^n \setminus Z(P)$.
\end{theorem}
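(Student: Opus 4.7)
My approach is to reduce the problem to a Morse-theoretic count of critical points of a generic linear function, and then to bound those critical points by B\'ezout. First I would generically perturb the defining polynomials $p_1, \ldots, p_m$ and $P$, so that $\gamma$ becomes a smooth $k$-dimensional submanifold of $\RR^n$ and $Z(P)$ meets $\gamma$ transversely. A standard limiting argument shows that this perturbation only increases the number of connected components of $\gamma \setminus Z(P)$, so it suffices to prove the bound in the smooth transverse case. Unbounded components may be handled by intersecting with a large ball whose boundary is introduced as an extra defining equation, reducing the problem to counting bounded components.

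Next I would fix a generic linear function $\ell \colon \RR^n \to \RR$ and observe that each bounded component $U$ of $\gamma \setminus Z(P)$ contributes a distinguished critical point: the point where $\ell|_{\bar U}$ attains its minimum. Either this minimum is interior to $U$, in which case it is a critical point of $\ell|_\gamma$, or it lies on $\partial U \subset Z(P) \cap \gamma$, in which case (by Lagrange multipliers) it is a critical point of $\ell$ restricted to the $(k-1)$-dimensional submanifold $Z(P) \cap \gamma$. Hence the number of components is bounded by the total number of critical points of $\ell$ on these two varieties.

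Finally, I would bound each family of critical points by B\'ezout. The critical points of $\ell|_\gamma$ form a zero-dimensional variety cut out by equations of degrees depending only on $d$ and $n$; their number is thus at most some constant $C(d, m, n)$. The critical points of $\ell|_{Z(P) \cap \gamma}$ live on the $k$-dimensional manifold $\gamma$, cut out there by $P = 0$ (degree $D$) together with a tangency condition saying that $d(P|_\gamma)$ and $d(\ell|_\gamma)$ are parallel (degree $\le D$ in the crucial factor, with the other factors depending only on $d$). Carrying out the B\'ezout count intrinsically on the $k$-dimensional $\gamma$ yields the bound $O_{d,m,n}(D^k)$. The main obstacle is arranging the degree bookkeeping so that only $k$ factors of $D$ appear, not $n$; equivalently, one can organize the argument as an induction on $k$, with base case $k = 0$ (a zero-dimensional variety of size $O_{d,m,n}(1)$) and inductive step using the hyperplane pencil $\ell^{-1}(t)$ to reduce to counting components of $(k-1)$-dimensional slices.
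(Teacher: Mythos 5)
The paper does not prove this statement: it is quoted as Theorem A2 of Solymosi--Tao \cite{ST}, whose appendix contains the proof, with the sharper version due to Barone--Basu \cite{BB}. So there is no in-paper proof to compare against; I can only evaluate your proposal on its own and against those sources.

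Your sketch has the right overall flavor (a Morse-theoretic critical-point count controlled by a B\'ezout / Milnor--Thom bound), but there are two genuine gaps. First, the perturbation step fails as stated. The hypotheses allow $m > n-k$, and in that case a generic perturbation of $p_1, \dots, p_m$ produces a real variety of dimension $n-m < k$ (typically empty) rather than a smooth $k$-manifold near $\gamma$. Replacing $\gamma$ by $\{\sum_j p_j^2 = \eps\}$ or $\{\sum_j p_j^2 \le \eps\}$ avoids this but gives a hypersurface or a full-dimensional set, destroying exactly the dimension-$k$ information that produces the exponent $k$ in $D^k$. Even in the complete-intersection case, the assertion that perturbation ``only increases the number of connected components'' is not a standard fact and is false in general: smoothing a nodal real cubic $y^2 = x^2(x+1)$ yields one or two components depending on the sign of the perturbation, so you would have to argue that a compatible direction of perturbation can be chosen simultaneously for all $m$ equations and for $P$, and that it increases the component count of $\gamma \setminus Z(P)$ rather than of $\gamma$ alone.

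Second, the phrase ``carrying out the B\'ezout count intrinsically on the $k$-dimensional $\gamma$'' is where all the work actually is. You need the Lagrange-multiplier locus on $Z(P) \cap \gamma$ to be zero-dimensional, and you need a version of B\'ezout that charges $\gamma$ itself with degree $O_{d,m,n}(1)$ and the $k$ cutting equations with degree $O_d(D)$ each, while controlling positive-dimensional excess components. Making this precise (over $\RR$, for a possibly singular non-complete-intersection $\gamma$) is essentially the content of \cite{BB} and cannot be waved through. Your closing remark is the right move: organizing the argument as an induction on $k$, slicing by the pencil $\ell^{-1}(t)$ and reducing to $(k-1)$-dimensional fibers with a direct Milnor--Thom count at $k=0$, is in fact the structure of the Solymosi--Tao proof in \cite{ST}, and is far more robust than the perturbation-plus-B\'ezout route; I would develop that version instead.
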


(Remark. For the definition of a $k$-dimensional variety, see Section 4 of \cite{ST}.)

Suppose that $P$ was a degree $D$ polynomial and that $\RR^n \setminus Z(P)$ consisted of $\sim D^n$ cells and that each cell intersected the same number of varieties $\gamma \in \Gamma$.  Then Theorem \ref{numbercells} would imply that each connected component of $\RR^n \setminus Z(P)$ intersected at most $C(d,m,n) D^{k-n} |\Gamma|$ varieties $\gamma \in \Gamma$.  We prove that there is a polynomial $P$ of degree at most $D$ that obeys this bound.

\begin{theorem} \label{polypartk} Suppose $\Gamma$ is a set of $k$-dimensional varieties in $\RR^n$, each defined by at most $m$ polynomial equations of degree at most $d$.  For any $D \ge 1$, there is a non-zero polynomial $P$ of degree at most $D$, so that each connected component of $\RR^n \setminus Z(P)$ intersects at most $C(d,m,n) D^{k-n} |\Gamma|$ varieties $\gamma \in \Gamma$.  
\end{theorem}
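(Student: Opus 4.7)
The plan is to adapt the iterated polynomial ham sandwich argument that proves Theorem~\ref{polypartgk} to the setting of varieties, using Theorem~\ref{numbercells} to track the propagation of intersections across cells. The main new complication is that when a bisecting polynomial $Q$ is added to the partition, a single variety can straddle $Z(Q)$ and appear in multiple new sub-cells, so standard bisection of ``variety count'' is not clean; this has to be accounted for using the bound on cell-intersections of a variety from Theorem~\ref{numbercells}.

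I would construct $P = Q_1 Q_2 \cdots Q_J$ with $J \sim n \log_2 D$ factors, each $Q_j$ of degree $\delta_j \sim 2^{j/n}$ produced by polynomial ham sandwich, so that $\deg P \sim D$ and the number of cells at stage $j$ is $\sim 2^j$. The key step is a bisection lemma: given $P_j$ of degree $D_j$ such that each cell of $\RR^n \setminus Z(P_j)$ intersects at most $M_j$ varieties, produce $Q_{j+1}$ of degree $\delta_{j+1}$ such that each cell of $\RR^n \setminus Z(P_j Q_{j+1})$ intersects at most $M_{j+1}$ varieties, with a recursion of the form $M_{j+1} \le M_j/2 + C \delta_{j+1}^k$. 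The polynomial $Q_{j+1}$ is obtained from polynomial ham sandwich applied to measures built from representative points of varieties in each cell, together with auxiliary measures designed to bound the straddling effect when a variety crosses $Z(Q_{j+1})$ inside a cell; Theorem~\ref{numbercells}, applied to each variety with the polynomial $Q_{j+1}$ (or with the full product $P_{j+1}$), bounds the number of pieces each variety contributes, and hence controls the error term. Solving the recursion after $J$ stages yields $M_J \lesssim |\Gamma|/D^n + D^k = O(D^{k-n}|\Gamma|)$ whenever $|\Gamma| \gtrsim D^n$; the case $|\Gamma| \lesssim D^{n-k}$ is handled by a dimension-count argument producing a nonzero $P$ of degree $\le D$ that vanishes on every $\gamma \in \Gamma$, and the intermediate range follows by choosing a smaller effective degree.

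The principal obstacle is making the bisection lemma rigorous: bisection of representative points alone does not cleanly halve the variety count per cell, because a variety whose representative lies on one side of $Z(Q_{j+1})$ may still intersect the opposite sub-cell if it straddles the zero set, and in the worst case every variety in a cell could straddle. To bound this straddling uniformly across cells (and not merely on average), the ham sandwich must simultaneously balance both the representative points and the crossings, which I expect requires augmenting the measures being bisected or applying Theorem~\ref{numbercells} at both the local and global level. The role of Theorem~\ref{numbercells} is essential here: it caps the number of crossings each variety contributes by $O(\delta^k)$, which is exactly the error term that, when summed over the $J$ iterations, telescopes to an overall $O(D^k)$ absorbed by the main bound $O(D^{k-n}|\Gamma|)$ when $|\Gamma| \gtrsim D^n$. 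Getting this accounting tight enough that the exponent $k$ in the error matches the target exponent $k-n$ in the final count (rather than producing a weaker bound or extra logarithmic factors) is where I expect the bulk of the technical work to go.
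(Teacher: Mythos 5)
Your plan is the iterated ham sandwich argument, and the step you yourself flag as the ``principal obstacle'' --- the bisection lemma with recursion $M_{j+1} \le M_j/2 + C\delta_{j+1}^k$ --- is exactly where the argument breaks, and it is precisely the obstruction that forces the paper onto a different route. There is no per-cell bound of the form $C\delta_{j+1}^k$ on the number of straddling varieties: Theorem \ref{numbercells} bounds how many cells a \emph{single} variety enters (a global count per variety), not how many of the varieties meeting a \emph{fixed} cell cross $Z(Q_{j+1})$ inside that cell. In the worst case every one of the $M_j$ varieties meeting a cell straddles the new zero set, so bisecting representative points (or any measures you attach to the varieties) can leave both sub-cells meeting all $M_j$ varieties, i.e.\ $M_{j+1}=M_j$ with no gain. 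This is the issue the paper attributes to Zahl (the $|\Gamma_{++}|=|\Gamma_{+-}|=40$, $|\Gamma_{-+}|=|\Gamma_{--}|=80$ example): sequentially halving each cell's own count does not equalize counts across cells, because the quantities being halved are not shares of a common total. Averaging does give the right order per cell ($\lesssim D^{k}|\Gamma|/D^{n}$), but converting the average into a maximum is the whole difficulty, and your proposal supplies no mechanism for it. There is also a quantitative gap even if the recursion were granted: the accumulated error $D^k$ is only absorbed by $D^{k-n}|\Gamma|$ when $|\Gamma|\gtrsim D^n$, your vanishing trick covers only $|\Gamma|\lesssim D^{n-k}$, and in the intermediate range lowering the effective degree moves you the wrong way (since $k-n<0$, a smaller degree makes the target bound harder, not easier), so the range $D^{n-k}\ll|\Gamma|\ll D^n$ is not handled.

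The paper's proof abandons the sequential scheme entirely: all $s$ sign-partitioning polynomials are chosen \emph{simultaneously} as a point of $X_s=\prod_{j=1}^s S^{2^{j-1}}$ (spheres in subspaces of $\Poly_{D_j}$), and one imposes the $2^s-1$ balance conditions $G_v(\vec P)=0$, $v\in\ZZ_2^s\setminus\{0\}$, comparing the total counts over cells with $w\cdot v=0$ against those with $w\cdot v=1$. A new Borsuk--Ulam-type theorem (Theorem \ref{buvar}) produces a common zero of all these functions, which forces the number of varieties meeting each of the $2^s$ cells to be exactly equal (Lemma \ref{equidistw}); combined with Theorem \ref{numbercells} this gives the $C(d,m,n)D^{k-n}|\Gamma|$ bound uniformly, in every range of $|\Gamma|$ versus $D$. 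Since the counts are not continuous in $\vec P$, the paper also builds a continuous surrogate $I_\delta^\gamma$ and passes to a limit by compactness of $X_s$. If you want to salvage your outline, you would need either a genuinely new idea to control per-cell straddling, or to adopt this simultaneous, topological equidistribution step.
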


Let us sketch the proof of Theorem \ref{polypartgk} and explain the new difficulty that comes up in proving Theorem \ref{polypartk}.  To prove Theorem \ref{polypartgk}, we find a sequence of polynomials $P_1, P_2,$ etc.  The final polynomial $P$ will be the product $\prod_{j} P_j$.  We choose $P_1$ to bisect $X$: in other words, we choose $P_1$ so that (at most) half of the points of $X$ lie in $\{ P_1 > 0 \}$ and (at most) half of the points of $X$ lie in $\{ P_1 < 0 \}$.  Then we choose $P_2$ to bisect each of these sets.  In other words, $P_2$ bisects the set $\{ x \in X | P_1(x) > 0 \}$ and the set $\{ x \in X | P_1(x) < 0 \}$.  The sign conditions of $P_1$ and $P_2$ determine four regions, and each region contains at most a quarter of the points of $X$.  At each step, we can find the polynomial $P_j$ by the Stone-Tukey ham sandwich theorem \cite{ST} which gives a good estimate for the degree of $P_j$.  The Stone-Tukey ham sandwich theorem in turn follows from the Borsuk-Ulam theorem.  

Suppose that we take this approach to try to prove Theorem \ref{polypartk}.  Josh Zahl pointed out to me the following issue.  Recall that $\Gamma$ is a set of $k$-dimensional varieties.  For concreteness, suppose that $\Gamma$ is a set of $100$ lines.  We first try to choose a polynomial $P_1$ so that the number of lines of $\Gamma$ that enter $\{ P_1 > 0 \}$ is equal to the number of varieties of $\Gamma$ that enter $\{ P_1 < 0 \}$.  We could do something close to this with the ham sandwich theorem.  Notice that a line may enter both regions, or it may lie in one region.  So it could happen that 50 of the 100 lines enter each region, or it could happen that all 100 lines enter each region, or anything in between.  This issue isn't a problem on the first step, but it will become a problem when we try to choose $P_2$.  

Let $\Gamma_+ \subset \Gamma$ be the set of lines of $\Gamma$ that enter $\{ P_1 > 0 \}$ and let $\Gamma_- \subset \Gamma$ be the set of lines that enter $\{ P_1 < 0 \}$.  Suppose for this example that $|\Gamma_+| = |\Gamma_-| = 80$.  
Next we try to choose $P_2$ to ``bisect'' $\Gamma_+$ and $\Gamma_-$.
In other words, we want the number of lines of $\Gamma_+$ that enter $\{ P_2 > 0 \}$ to be equal to the number of lines of $\Gamma_+$ that enter $\{ P_2 < 0 \}$, and similarly for $\Gamma_-$.  We define $\Gamma_{++} \subset \Gamma_+$ to be the set of lines of $\Gamma$ that enter the region $\{ P_1 > 0, P_2 > 0\}$, we define $\Gamma_{+-} \subset \Gamma_+$ to be the set of lines of $\Gamma$ that enters the region $ \{ P_1 > 0, P_2 < 0 \}$, and similarly we define $\Gamma_{-+}, \Gamma_{--} \subset \Gamma_-$.  If $P_2$ bisects $\Gamma_+$ and $\Gamma_-$, then we get $| \Gamma_{++}| = | \Gamma_{+-}|$ and $| \Gamma_{-+}| = | \Gamma_{--}|$.  But now the following issue arises.  If $|\Gamma_+| = |\Gamma_-| = 80$, it may happen that $| \Gamma_{++}| = | \Gamma_{+-}| = 40$ and $| \Gamma_{-+}| = | \Gamma_{--}| = 80$.  The four sets of lines are not equal!

To explain our approach to Theorem \ref{polypartk}, suppose we just wanted to choose two polynomials $P_1, P_2$ so that $|\Gamma_{++}| = |\Gamma_{+-}| = |\Gamma_{-+}| = |\Gamma_{--}|$.  Instead of choosing $P_1$ and then $P_2$, we simultaneously look for polynomials $P_1, P_2$ so that the following three equations hold:

$$ | \Gamma_{++}| + |\Gamma_{+-}| = |\Gamma_{--}| + |\Gamma_{-+}|, $$

$$ |\Gamma_{++}| + |\Gamma_{-+}| = |\Gamma_{--}| + |\Gamma_{+-}|, $$

$$ |\Gamma_{++}| + |\Gamma_{--}| = |\Gamma_{-+}| + |\Gamma_{+-}|. $$

\noindent  We can find polynomials $P_1, P_2$ that obey (a continuous approximation of) these equations by using a cousin of the Borsuk-Ulam theorem.  These three equations then imply that $|\Gamma_{++}| = |\Gamma_{+-}| = |\Gamma_{-+}| = |\Gamma_{--}|$.   The cousin of the Borsuk-Ulam theorem that we use is new in the sense that it has not been written down before, but it follows from standard arguments in topology.  

In Section 1, we state the cousin of the Borsuk-Ulam theorem that we need.  In Section 2 we give an outline of the proof of our main theorem.  In Section 3, we give the full proof.  In Section 4, we prove the cousin of the Borsuk-Ulam theorem stated in Section 1.  

\vskip10pt

{\bf Acknowledgements.} I would like to thank Josh Zahl for interesting conversations related to this paper.  I would also like to thank the referee for some helpful suggestions.

\section{A variation of the Borsuk-Ulam theorem}

Theorems \ref{polypartgk} and \ref{polypartk} use topological arguments.
Theorem \ref{polypartgk} has a short proof using the Stone-Tukey ham sandwich theorem (\cite{StTu}), which in turn follows from the Borsuk-Ulam theorem.  Our proof of Theorem \ref{polypartk} uses a cousin of the Borsuk-Ulam theorem. 

For context, we recall the Borsuk-Ulam theorem (cf. Chapter 2.6 of \cite{GP}).

\begin{theorem} Suppose that $f: S^n \rightarrow \RR^n$ is a continuous map obeying the antipodal condition $f( - x) = - f(x)$.  Then there is a point $x \in S^n$ where $f(x) = 0$.
\end{theorem}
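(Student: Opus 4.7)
I would argue by contradiction, reducing the theorem to the classical fact that any antipodal self-map of $S^n$ has odd (hence nonzero) degree. Suppose $f(x)\neq 0$ for every $x\in S^n$. Then $g(x):=f(x)/|f(x)|$ is a continuous antipodal map $g:S^n\to S^{n-1}$. Composing with the equatorial inclusion $i:S^{n-1}\hookrightarrow S^n$ yields an antipodal map $h:=i\circ g:S^n\to S^n$ whose image avoids the two poles. Since the image lies in $S^n$ minus a point, the straight-line homotopy in that punctured sphere contracts $h$ to a constant, so $\deg h=0$.

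\textbf{Key step.} It remains to prove that every continuous antipodal map $h:S^n\to S^n$ has odd degree. My preferred route is to pass to the quotient $\pi:S^n\to\mathbb{RP}^n$; the antipodal symmetry lets $h$ descend to a continuous $\bar h:\mathbb{RP}^n\to\mathbb{RP}^n$. A loop in $\mathbb{RP}^n$ that lifts to an arc from $p$ to $-p$ in $S^n$ is sent by $\bar h$ to a loop whose lift runs from $h(p)$ to $h(-p)=-h(p)$, which is again not a loop upstairs; hence $\bar h_*:\pi_1(\mathbb{RP}^n)\to\pi_1(\mathbb{RP}^n)$ is nontrivial, and so $\bar h^*$ acts as the identity on $H^1(\mathbb{RP}^n;\FF_2)=\FF_2$. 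Because $H^*(\mathbb{RP}^n;\FF_2)=\FF_2[x]/(x^{n+1})$ is generated in degree one, naturality of cup products forces $\bar h^*$ to be the identity on $H^n(\mathbb{RP}^n;\FF_2)$ as well. A standard transfer argument identifies this top-dimensional action with the mod-$2$ reduction of $\deg h$, giving $\deg h\equiv 1\pmod 2$ and contradicting Step 1.

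\textbf{Main obstacle.} The genuinely nontrivial content is the odd-degree assertion; the reduction step is cosmetic. I would expect to spend most of the effort verifying that $\bar h_*$ is nontrivial on $\pi_1$ and that the transfer identifies the top cohomology action with the mod-$2$ degree. If I wished to avoid algebraic topology entirely, an alternative is induction on $n$: the base case $n=1$ is a direct winding-number calculation, and for the inductive step I would approximate $h$ by a smooth antipodal map (symmetrizing a smooth approximation $\tilde h$ via $x\mapsto\tfrac12(\tilde h(x)-\tilde h(-x))$ and renormalizing), choose a regular value $q\in S^{n-1}\subset S^n$ with $-q$ also regular, and apply the inductive hypothesis to the restriction of $h$ to the antipodal smooth hypersurface $h^{-1}(S^{n-1})$ to obtain a mod-$2$ parity count of $h^{-1}(q)$ showing $\deg h$ is odd.
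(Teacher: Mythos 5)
The paper does not prove this theorem at all: it is quoted as background with a pointer to Chapter~2.6 of Guillemin--Pollack \cite{GP}, and its role here is to motivate Theorem~\ref{buvar}, whose proof in Section~\ref{secproofbuvar} is the paper's actual topological content. Both the cited reference and the paper's own proof of Theorem~\ref{buvar} work in the differential-topology style: smooth the map while preserving the antipodal symmetry, restrict attention to a hemisphere, and compare the mod-$2$ count of preimages of a regular value against a model map via the homotopy lemma (Theorem~\ref{brouwer}) to conclude the count is odd and in particular nonzero. Your argument is correct but takes the alternative algebraic-topology route: descend $h$ to $\bar h:\RR P^n\to\RR P^n$, observe that a lift of a generating loop runs from $h(p)$ to $-h(p)$ so $\bar h_*$ is surjective on the mod-$2$ quotient of $\pi_1$, hence $\bar h^*$ fixes the generator of $H^1(\RR P^n;\FF_2)$, hence by the ring structure $\FF_2[x]/(x^{n+1})$ it fixes $x^n$, and then use the fact that the cohomology transfer for the double cover $S^n\to\RR P^n$ is an isomorphism on $H^n(\cdot;\FF_2)$ and is natural with respect to the commuting square $(h,\bar h)$ to pull this back to $\deg h\equiv 1\pmod 2$, contradicting the degree-$0$ conclusion from the equatorial factorization. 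Both proofs reduce to the same odd-degree lemma for antipodal self-maps; the differential-topology route is self-contained for a reader already following \cite{M2} or \cite{GP} and reuses exactly the machinery the paper sets up for Theorem~\ref{buvar}, while your cohomological route is shorter once the ring $H^*(\RR P^n;\FF_2)$ and the transfer are taken as known, and it is the version that generalizes most cleanly to $\ZZ/2$-equivariant index arguments. One small point to tighten: for $n=1$ the group $\pi_1(\RR P^1)$ is $\ZZ$, not $\ZZ/2$, so ``nontrivial on $\pi_1$'' should be read through the mod-$2$ abelianization; your $H^1(\cdot;\FF_2)$ phrasing already handles this uniformly.
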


To state our cousin of the Borsuk-Ulam theorem, we need some definitions.  

Let $X_s = \prod_{j=1}^s S^{2^{j-1}}$.  We note that $\Dim X_s = 2^s -1$.  We write a point $x \in X_s$ as $(x_1, ..., x_s)$ with $x_j \in S^{2^{j-1}}$.  We define the coordinate-flipping operation $Fl_j: X_s \rightarrow X_s$ by changing the sign of the $j^{th}$ coordinate:

$$ Fl_j( x_1, ..., x_{j-1}, x_j, x_{j+1}, ..., x_s) = ( x_1, ..., x_{j-1}, - x_j, x_{j+1}, ..., x_s). $$

For each $v \in \ZZ_2^s \setminus \{ 0 \}$, suppose that $f_v: X_s \rightarrow \RR$ is a continuous function that obeys the following antipodal-type condition:

\begin{equation}\label{f_vantipode}
 f_v (Fl_j x) = (-1)^{v_j} f_v (x) \textrm{ for all } j = 1, ..., s. 
\end{equation}

Note that we have $2^s - 1$ functions $f_v: X_s \rightarrow \RR$.  The dimension of $X_s$ is also $2^s - 1$. 

\begin{theorem} \label{buvar} If $f_v: X_s \rightarrow \RR$ are continuous functions that obey equation \ref{f_vantipode} for each $v \in \ZZ_2^s \setminus \{ 0 \}$, then there exists a point $x \in X_s$ where $f_v(x) = 0$ for all $v \in \ZZ_2^s \setminus \{ 0 \}$.  
\end{theorem}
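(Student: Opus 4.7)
The plan is to rule out, by an equivariant Stiefel--Whitney class computation, the existence of a nowhere-vanishing $G$-equivariant map, in analogy with the standard proof that Borsuk--Ulam follows from the fact that $w_1$ of the M\"obius bundle is nonzero. Let $G = \ZZ_2^s$ act on $X_s$ through the flips $Fl_j$, and for each $v \in \ZZ_2^s$ let $\chi_v : G \to \{\pm 1\}$ be the character with $\chi_v(Fl_j) = (-1)^{v_j}$, so that $\RR_v := (\RR, \chi_v)$ is a one-dimensional real $G$-representation. Condition \eqref{f_vantipode} says exactly that each $f_v : X_s \to \RR_v$ is $G$-equivariant; bundling, $F := (f_v)_{v \ne 0} : X_s \to V := \bigoplus_{v \ne 0} \RR_v$ is $G$-equivariant with $\Dim V = 2^s - 1 = \Dim X_s$. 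Assume toward a contradiction that $F$ is nowhere zero.

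Passing to the Borel construction $X_G := X_s \times_G EG$, the map $F$ induces a nowhere-vanishing section of the rank-$(2^s-1)$ real vector bundle $E := (X_s \times EG) \times_G V \to X_G$, forcing the top Stiefel--Whitney class $w_{2^s-1}(E) \in H^{2^s-1}(X_G;\ZZ_2)$ to vanish. Since $Fl_j$ acts antipodally on the $j$-th factor of $X_s = \prod_j S^{2^{j-1}}$ and trivially on the others, $X_G$ is homotopy equivalent to $\prod_j \RR P^{2^{j-1}}$, so
$$H^*(X_G;\ZZ_2) \;\cong\; \bigotimes_{j=1}^s \ZZ_2[t_j]/(t_j^{2^{j-1}+1}), \qquad \deg t_j = 1.$$
Since $\chi_v = \bigotimes_j \chi_{e_j}^{v_j}$ and the M\"obius bundle on the $j$-th factor has $w_1 = t_j$, the Whitney product formula yields
$$w_{2^s-1}(E) \;=\; \prod_{v \in \ZZ_2^s \setminus \{0\}} \bigl( v_1 t_1 + \cdots + v_s t_s \bigr).$$

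The heart of the argument is then to show this product is \emph{non}zero in $H^*(X_G;\ZZ_2)$. As a polynomial in $\ZZ_2[t_1, \ldots, t_s]$ the product is the classical top Dickson invariant and equals the Moore determinant $\det(t_i^{2^{j-1}})_{1 \le i, j \le s}$ (both sides have degree $2^s - 1$ and the same vanishing locus, the $\FF_2$-linearly dependent tuples). Expanding the determinant, a permutation $\sigma \in S_s$ contributes the monomial $\prod_i t_i^{2^{\sigma(i)-1}}$; this monomial survives modulo the ideal $(t_i^{2^{i-1}+1})_i$ iff $\sigma(i) \le i$ for every $i$, which forces $\sigma = \mathrm{id}$. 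Hence in $H^*(X_G;\ZZ_2)$,
$$w_{2^s-1}(E) \;=\; t_1 \cdot t_2^2 \cdots t_s^{2^{s-1}},$$
the unique, nonzero top class of $\prod_j \RR P^{2^{j-1}}$. This contradicts the vanishing forced by a nowhere-vanishing section, so $F$ must have a zero, as required.

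The hardest step will be the Dickson computation: matching the truncation exponents $2^{j-1}$ in the Borel cohomology of $\prod_j \RR P^{2^{j-1}}$ with the exponents $2^{\sigma(j)-1}$ appearing in the determinant expansion so that exactly one permutation's monomial survives in top degree and no mod-$2$ cancellation destroys it. This calibration is in fact the reason the specific factor dimensions $2^{j-1}$ are built into the definition of $X_s$ in the first place.
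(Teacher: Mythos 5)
Your proof is correct, but it takes a genuinely different route from the paper. The paper argues by elementary mod-$2$ degree theory: it invokes Brouwer's homotopy lemma (that $\# f^{-1}(0)$ modulo $2$ is preserved when two maps agree and don't vanish on the boundary), explicitly constructs a ``test'' map $g$ of the form $g_v(x) = x_v \prod_{j<j(v),\, v_j=1} t_j$ with exactly $2^s$ nondegenerate zeros, and then interpolates from a smooth approximation of $f$ to $g$ through a chain of maps $f_i$ agreeing on boundaries of products of hemispheres, concluding $\# f_1^{-1}(0) \equiv 2^s \pmod{2^{s+1}}$. This keeps the prerequisites to Milnor's \emph{Topology from the Differentiable Viewpoint} and is self-contained. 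Your argument instead passes to the Borel quotient $X_G \simeq \prod_j \RR P^{2^{j-1}}$ (valid, since the $\ZZ_2^s$-action is free) and kills a hypothetical nowhere-vanishing equivariant map by computing the top Stiefel--Whitney class $w_{2^s-1}(\bigoplus_{v\ne 0} L_v) = \prod_{v\ne 0}(v_1 t_1 + \cdots + v_s t_s)$, which you recognize as the top Dickson invariant and evaluate, via the Moore determinant expansion, to the nonzero fundamental class $t_1 t_2^2 \cdots t_s^{2^{s-1}}$. This is a heavier hammer, but it buys conceptual clarity: the survival of exactly the identity permutation's monomial in $\bigotimes_j \ZZ_2[t_j]/(t_j^{2^{j-1}+1})$ explains \emph{why} the sphere dimensions $2^{j-1}$ are the right calibration --- a point the paper's construction of $g$ achieves implicitly rather than transparently. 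One small soft spot: the claim that the Dickson product equals the Moore determinant ``because both have degree $2^s - 1$ and the same vanishing locus'' deserves one more line --- each nonzero linear form $\sum_j v_j t_j$ divides $\det(t_i^{2^{j-1}})$ because the determinant vanishes on $\FF_2$-linearly dependent tuples, these $2^s - 1$ linear forms are pairwise non-associate primes, so their product (degree $2^s - 1$) divides the determinant (degree $2^s - 1$), forcing equality up to a nonzero constant in $\FF_2$, i.e.\ equality. With that filled in, the argument is complete.
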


We prove Theorem \ref{buvar} in Section \ref{secproofbuvar}.  First, we explain how to use Theorem \ref{buvar} to prove our main result, Theorem \ref{polypartk}.

\section{Outline of the proof of the partitioning theorem}

Let $\Poly_D (\RR^n)$ be the vector space of polynomials on $\RR^n$ with degree at most $D$.  For fixed $n$, $\Dim \Poly_D (\RR^n) \sim_n D^n$.  For each $j$, we choose $D_j$ so that $\Dim \Poly_{D_j}(\RR^n) > 2^{j-1}$.  We have $D_j \lesssim 2^{j/n}$.  

We pick a subspace of $\Poly_{D_j}(\RR^n)$ with dimension $2^{j-1} + 1$, and we identify $S^{2^{j-1}}$ with the unit sphere in this subspace.  In this way we get an embedding

$$ X_s \subset \prod_{j=1}^s \Poly_{D_j} (\RR^n) . $$

We let $D = \sum_j D_j \lesssim 2^{s/n}$.

If $\vec P = (P_1, ..., P_s) \in X_s \subset \prod_{j=1}^s \Poly_{D_j} (\RR^n)$, then for any $w \in \ZZ_2^s$, we define the cell

$$O(\vec P, w) := \{ x \in \RR^n | \Sign P_j(x) = (-1)^{w_j} \textrm{ for all } j \in [1, ..., s] \}. $$

Note that $P_{prod} = \prod_{j=1}^s P_j$ has degree at most $D$.  We see that $\RR^n \setminus Z(P_{prod})$ is the disjoint union of the cells $O(\vec P, w)$.  The number of $w \in \ZZ_2^s$ is $2^s \sim D^n$.  For a good choice of $\vec P \in X_s$, we will show that each of these cells does not intersect too many varieties of $\Gamma$.  

Recall that $\Gamma$ is a finite set of $k$-dimensional varieties.  For $\gamma \in \Gamma$, we let $I^{\gamma}(\vec P, w)$ be the indicator function:

$$ I^{\gamma}(\vec P, w) = 1 \textrm{ if } O(\vec P, w) \cap \gamma \textrm{ is non-empty} ; $$

$$ I^{\gamma}(\vec P, w) = 0 \textrm{ if } O(\vec P, w) \cap \gamma \textrm{ is empty}. $$

Note that $\sum_{\gamma \in \Gamma} I^{\gamma}(\vec P, w)$ is the number of varieties $\gamma \in \Gamma$ that intersect $O(\vec P, w)$.  

Define $G_v(\vec P)$ as follows:

$$ G_v(\vec P) := \sum_{w \in \ZZ_2^s, w \cdot v = 0}  \sum_{\gamma \in \Gamma} I^{\gamma}(\vec P, w) - \sum_{w \in \ZZ_2^s, w \cdot v = 1}  \sum_{\gamma \in \Gamma} I^{\gamma}(\vec P, w). $$

The function $G_v$ obeys the antipodal-type condition in equation \ref{f_vantipode}: $ G_v( Fl_j \vec P) = (-1)^{v_j} G_v (\vec P)$.  
The functions $G_v: X_s \rightarrow \RR$ are not continuous: the problem is that the indicator function $I^{\gamma}(\vec P, w)$ is not continuous in $\vec P$.  Therefore, Theorem \ref{buvar} does not apply to $G_v$.  Nevertheless, to get a feel for the proof, let us pretend for a moment that $G_v$ was continuous.  Then Theorem \ref{buvar} would imply that there exists $\vec P \in X_s$ so that $G_v(\vec P) = 0$ for all $v \in \ZZ_2^s \setminus \{ 0 \}$.  Then a short calculation would show that $\sum_{\gamma \in \Gamma} I^{\gamma}(\vec P, w) $ is independent of $w$.  (This calculation is explained in Lemma \ref{equidistw} below.)  In other words, each of the $2^s$ cells $O(\vec P, w)$ would intersect the same number of varieties $\gamma \in \Gamma$.  Since each variety $\gamma$ can enter at most $C(d,m,n) D^k$ cells, the number of varieties intersecting each cell would be at most $2^{-s} C(d,m,n) D^k |\Gamma| \le C_n C(d,m,n) D^{k-n} |\Gamma|$.  This would  prove Theorem \ref{polypartk}.

The fact remains that $G_v: X_s \rightarrow \RR$ is not continuous.  We will consider continuous approximations of $G_v$.  

\section{Continuous approximation}

We will define $I_\delta^{\gamma}(\vec P, w)$ for each $\delta > 0$.  The functions $I_\delta$ are a continuous approximation of the indicator function $I^{\gamma}(\vec P, w)$.  More precisely, we will define $I_\delta$ with the following properties.

\begin{lemma} \label{I_deltaprop} For each $\delta > 0$, $\gamma \in \Gamma$, $w \in \ZZ_2^s$, and $\vec P \in X_s$, we will define $I_\delta^{\gamma}(\vec P, w) \in \RR$ with the following properties.

\begin{enumerate} 

\item For each $\delta > 0$, $w \in \ZZ_2^s$ and $\gamma \in \Gamma$, $I_\delta^{\gamma}(\vec P, w)$ is continuous in $\vec P\in X_s$.

\item $0 \le I_\delta^{\gamma}(\vec P, w) \le 1$.

\item If $\gamma \cap O(\vec P, w)$ is empty, then $I_\delta^{\gamma}(\vec P, w) = 0$.

\item If $\delta_i \rightarrow 0$ and $\vec P_i \rightarrow \vec P$ in $X_s$ and $\gamma \cap O(\vec P, w)$ is non-empty, then

$$ \lim_{i \rightarrow \infty} I_{\delta_i}^{\gamma}(\vec P_i, w) = 1. $$

In other words, $I^{\gamma}(\vec P, w) \le \liminf_{i \rightarrow \infty} I_{\delta_i}^{\gamma}(\vec P_i, w)$.  

\end{enumerate}

\end{lemma}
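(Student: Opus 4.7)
The plan is to construct $I_\delta^{\gamma}(\vec P, w)$ as a truncated supremum of a continuous ``depth certificate'' quantifying how deep a point $x \in \gamma$ lies inside the cell $O(\vec P, w)$. Let $\psi_\delta: \RR \to [0,1]$ be the continuous piecewise-linear ramp that vanishes on $(-\infty, 0]$, equals $1$ on $[\delta, \infty)$, and interpolates linearly on $[0,\delta]$. Since a variety $\gamma$ may be unbounded, I truncate by a radius $R_\delta := 1/\delta$ (any quantity tending to $\infty$ as $\delta \to 0$ works) and define
$$ I_\delta^{\gamma}(\vec P, w) := \sup_{x \in \gamma \cap \overline{B(R_\delta)}} \ \min_{1 \le j \le s} \psi_\delta\bigl((-1)^{w_j} P_j(x)\bigr). $$

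Properties (2)--(4) are direct. Property (2) is immediate because $\psi_\delta$ takes values in $[0,1]$. For property (3), if $\gamma \cap O(\vec P, w)$ is empty, then for each $x \in \gamma$ some index $j$ has $(-1)^{w_j} P_j(x) \le 0$, so the corresponding $\psi_\delta$-factor vanishes, forcing the inner min and hence the supremum to be $0$. For property (4), pick $x_0 \in \gamma \cap O(\vec P, w)$, so $(-1)^{w_j} P_j(x_0) > 0$ for every $j$. If $\vec P_i \to \vec P$ in $X_s$ and $\delta_i \to 0$, then eventually $|x_0| \le 1/\delta_i$ (so $x_0$ is in the truncated domain) and $(-1)^{w_j} (P_i)_j(x_0) \ge \delta_i$ for every $j$, so each $\psi_{\delta_i}$-factor evaluated at $x_0$ equals $1$; combined with the trivial upper bound from (2), this yields $I_{\delta_i}^{\gamma}(\vec P_i, w) = 1$ for all large $i$.

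The one step requiring care is property (1), continuity in $\vec P$. The purpose of the truncation $\gamma \cap \overline{B(R_\delta)}$ is exactly to make the domain of the supremum a compact subset of $\RR^n$ that does not depend on $\vec P$ (it is the intersection of a closed set with a compact ball). With $\delta$ fixed, the integrand $(\vec P, x) \mapsto \min_j \psi_\delta((-1)^{w_j} P_j(x))$ is jointly continuous on the compact set $X_s \times (\gamma \cap \overline{B(R_\delta)})$; uniform continuity on this compact product implies that if $\vec P_n \to \vec P$ then the integrand converges uniformly in $x$, and hence the supremum over $x$ converges. I expect this continuity check to be the main (and only mildly delicate) issue: the difficulty is entirely about handling the possible unboundedness of $\gamma$, which is neutralized by letting the truncation radius $R_\delta = 1/\delta$ diverge slowly enough that every witness point $x_0$ in property (4) is eventually captured.
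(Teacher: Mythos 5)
Your construction is correct, but it is genuinely different from the one in the paper. The paper defines $I_\delta^\gamma(\vec P, w)$ by integrating a cutoff of $\min_i |P_i|$ over $N_\delta \gamma \cap O(\vec P, w) \cap B_{R(\delta)}$ (normalized by $\delta^{-n}$) and then applying a second cutoff $\eta_\epsilon$; this forces it to choose $\epsilon(\delta), R(\delta)$ tending to their limits slowly, to prove Property 3 via a gradient bound (a point of $N_\delta\gamma$ in the cell is within $\delta$ of a point of $\gamma$ where some $P_i$ changes sign, so $\min_i|P_i| \le \epsilon$ there), and to argue continuity of the integral via convergence of the volume of the symmetric difference of the domains. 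Your definition $I_\delta^\gamma(\vec P,w) = \sup_{x \in \gamma \cap \overline{B(R_\delta)}} \min_j \psi_\delta((-1)^{w_j}P_j(x))$ works directly on $\gamma$ rather than on a $\delta$-neighborhood, which makes Properties 2--4 essentially immediate (Property 3 holds exactly because $\psi_\delta$ vanishes on $(-\infty,0]$, with no gradient estimate needed) and reduces Property 1 to uniform continuity of a jointly continuous function on the compact set $X_s \times (\gamma \cap \overline{B(R_\delta)})$ together with $|\sup_x F - \sup_x G| \le \sup_x |F-G|$; the price is nothing of substance, so your route is the more elementary of the two. Two small points to tidy: adopt the convention that the supremum over an empty set is $0$ (for fixed $\delta$ the truncated set $\gamma \cap \overline{B(1/\delta)}$ could be empty, and then $I_\delta^\gamma \equiv 0$ satisfies (1)--(3) trivially), and note explicitly that $\gamma$ is closed (being a real algebraic variety), which is what makes the truncated domain compact. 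Finally, although the lemma does not list it, the later argument needs the equivariance $I_\delta^\gamma(Fl_j \vec P, w) = I_\delta^\gamma(\vec P, w + e_j)$ to get the antipodal condition for $f_{\delta,v}$; your definition depends on $(\vec P, w)$ only through the signed values $(-1)^{w_j}P_j$, so it inherits this property just as the paper's does.
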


Using this Lemma, we finish the proof of Theorem \ref{polypartk}.  Then we define $I_\delta^{\gamma}(\vec P, w)$ and prove Lemma \ref{I_deltaprop}.  

First we define a continuous version of $G_v$, replacing the indicator $I$ with the continuous version $I_\delta$:

$$ f_{\delta, v}(\vec P) := \sum_{w \in \ZZ_2^s, w \cdot v = 0}  \sum_{\gamma \in \Gamma} I_\delta^{\gamma}(\vec P, w) - \sum_{w \in \ZZ_2^s, w \cdot v = 1}  \sum_{\gamma \in \Gamma} I_\delta^{\gamma}(\vec P, w). $$

Since $I_\delta$ is continuous in $\vec P \in X_s$, it follows that each function $f_{\delta,v}: X_s \rightarrow \RR$ is continuous.  Moreover, each function obeys the antipodal-type condition \ref{f_vantipode}: $f_{\delta, v} (Fl_j \vec P) = (-1)^{v_j} f_{\delta,v}(\vec P)$.  Therefore, for each $\delta > 0$, Theorem \ref{buvar} implies that there is a $\vec P_\delta \in X_s$ so that $f_{\delta,v} (\vec P_\delta) = 0$ for all $v \in \ZZ_2^s \setminus \{ 0 \}$.

\begin{lemma} \label{equidistw} For $\vec P_\delta$ as above, $\sum_{\gamma \in \Gamma} I_\delta^{\gamma}(\vec P_\delta, w)$ is constant in $w \in \ZZ_2^s$. 
\end{lemma}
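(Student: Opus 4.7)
The plan is to recognize that writing $a_w := \sum_{\gamma \in \Gamma} I_\delta^\gamma(\vec P_\delta, w)$, the $2^s-1$ equations $f_{\delta,v}(\vec P_\delta) = 0$ (for $v \in \ZZ_2^s \setminus \{0\}$) are exactly the statement that every nonzero Walsh--Hadamard (Fourier) coefficient of $a : \ZZ_2^s \to \RR$ vanishes. Fourier inversion on $\ZZ_2^s$ then forces $a$ to be a constant function of $w$, which is the conclusion.

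First I would rewrite $f_{\delta, v}(\vec P_\delta)$ using the characters $\chi_v(w) = (-1)^{v \cdot w}$ of $\ZZ_2^s$. Since the coefficient of $a_w$ is $+1$ when $v \cdot w = 0$ and $-1$ when $v \cdot w = 1$, the definition of $f_{\delta, v}$ gives
$$f_{\delta, v}(\vec P_\delta) \;=\; \sum_{w \in \ZZ_2^s} (-1)^{v \cdot w} \, a_w \;=\; \hat a(v).$$
The hypothesis on $\vec P_\delta$ therefore reads $\hat a(v) = 0$ for every $v \in \ZZ_2^s \setminus \{0\}$.

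Second, I would invoke Fourier inversion (equivalently, the orthogonality relation $\sum_{v} (-1)^{v\cdot w}(-1)^{v\cdot w'} = 2^s \mathbf{1}_{w = w'}$). For any $w, w' \in \ZZ_2^s$ this yields
$$a_w - a_{w'} \;=\; 2^{-s}\sum_{v \in \ZZ_2^s} \bigl[(-1)^{v \cdot w} - (-1)^{v \cdot w'}\bigr]\,\hat a(v).$$
The $v = 0$ term drops out because $(-1)^0 - (-1)^0 = 0$, and every term with $v \ne 0$ vanishes because $\hat a(v) = 0$ by hypothesis. Hence $a_w = a_{w'}$ for all $w, w'$, which is exactly the claim.

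There is really no obstacle here beyond recognizing the structure: the sign combination in the definition of $f_{\delta, v}$ is precisely the Walsh--Hadamard transform on $\ZZ_2^s$, and the lemma is just the elementary fact that a function on $\ZZ_2^s$ with only a nonzero $0$-th Fourier coefficient is constant. The role of Theorem \ref{buvar} in the proof of Theorem \ref{polypartk} is exactly to supply enough antipodal-type equations ($2^s-1$ of them, matching the dimension of $X_s$) to kill all nonconstant Fourier modes simultaneously.
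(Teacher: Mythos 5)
Your proof is correct. It is the same underlying computation as the paper's, but packaged differently. Writing $a_w := \sum_{\gamma \in \Gamma} I_\delta^{\gamma}(\vec P_\delta, w)$ as you do, you observe that $f_{\delta,v}(\vec P_\delta) = \sum_{w \in \ZZ_2^s} (-1)^{v \cdot w} a_w$ is the Walsh--Hadamard coefficient $\hat a(v)$, so the defining property of $\vec P_\delta$ says that every nontrivial Fourier coefficient of $a$ vanishes, and inversion (i.e.\ character orthogonality) immediately forces $a$ to be constant. The paper instead fixes a nonzero $u \in \ZZ_2^s$, sums the $2^{s-1}$ equations indexed by $\{ v : v \cdot u = 1 \}$, and evaluates by hand the multiplicities $N_0(w)$, $N_1(w)$ with which each $a_w$ appears on the two sides (namely $2^{s-1}$ and $0$ for $w = 0$, $0$ and $2^{s-1}$ for $w = u$, and $2^{s-2}$ each for all other $w$), concluding $a_0 = a_u$. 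Those counts are precisely the orthogonality relations your inversion formula invokes, so the two arguments are equivalent; yours is more conceptual, handles all $w$ simultaneously, and makes transparent why exactly $2^s - 1$ equations (one per nonzero $v$) suffice, while the paper's version is a self-contained double count that avoids any Fourier-analytic vocabulary.
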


\begin{proof} Let $u$ be a non-zero vector in $\ZZ_2^s$.  We will show that

$$\sum_{\gamma \in \Gamma} I_\delta^{\gamma}(\vec P_\delta, u) = \sum_{\gamma \in \Gamma} I_\delta^{\gamma}(\vec P_\delta, 0). $$

We know that $f_{\delta, v} (\vec P_\delta) = 0$ for all non-zero $v \in \ZZ_2^s$.  For each $v \in \ZZ_2^s \setminus \{ 0 \}$, we have:

$$ \sum_{w \in \ZZ_2^s, w \cdot v = 0}  \sum_{\gamma \in \Gamma} I_\delta^{\gamma}(\vec P_\delta, w) = \sum_{w \in \ZZ_2^s, w \cdot v = 1}  \sum_{\gamma \in \Gamma} I_\delta^{\gamma}(\vec P_\delta, w). $$

We sum this equation over the set of $v \in \ZZ_2^s$ obeying $v \cdot u = 1$.  There are $2^{s-1}$ such $v$, and they are each non-zero.  So we get:

$$ \sum_{v \cdot u = 1} \left( \sum_{w \in \ZZ_2^s, w \cdot v = 0}  \sum_{\gamma \in \Gamma} I_\delta^{\gamma}(\vec P_\delta, w) \right) =  \sum_{v \cdot u = 1} \left( \sum_{w \in \ZZ_2^s, w \cdot v = 1}  \sum_{\gamma \in \Gamma} I_\delta^{\gamma}(\vec P_\delta, w) \right). $$

To simplify the sum, we define $N_0(w)$ to be the number of $v$ with $v \cdot u = 1$ and $v \cdot w = 0$.  We define $N_1(w)$ to be the number of $v$ with $v \cdot u = 1$ and $v \cdot w = 1$.  With this language, the sum becomes:

$$ \sum_{w \in \ZZ_2^s} N_0(w) \sum_{\gamma \in \Gamma} I_\delta^{\gamma}(\vec P_\delta, w) = \sum_{w \in \ZZ_2^s} N_1(w) \sum_{\gamma \in \Gamma} I_\delta^{\gamma}(\vec P_\delta, w) . $$

Next we evaluate $N_0(w)$ and $N_1(w)$, which makes the formula much simpler.  Recall that $N_0(w)$ is the number of solutions $v$ to the equations $v \cdot u =1$ and $v \cdot w = 0$.  Similarly, $N_1(w)$ is the number of solutions $v$ to the equations $v \cdot u = 1$ and $v \cdot w = 1$.  First, $N_0(0) = 2^{s-1}$ and $N_1(0) = 0$.  Second, $N_0(u) = 0$, and $N_1(u) = 2^{s-1}$.  Finally, if $w$ is not equal to $0$ or $u$, then $w, u$ are linearly independent, and so $N_0(w) = N_1(w) = 2^{s-2}$.  Subtracting the terms in common on both sides and dividing by $2^{s-1}$, the last equation reduces to:

$$ \sum_{\gamma \in \Gamma} I_\delta^{\gamma}(\vec P_\delta, 0) = \sum_{\gamma \in \Gamma} I_\delta^{\gamma}(\vec P_\delta, u). $$

\end{proof}

By Property 3 of Lemma \ref{I_deltaprop}, we know that if $\gamma \cap O(P, w)$ is empty, then $I_\delta^{\gamma}(\vec P, w) = 0$.  Also, by Proposition \ref{numbercells} each variety $\gamma$ enters at most $C(d,m,n) D^k$ of the cells $O(P_\delta, w)$.  Therefore, for any $\vec P \in X_s$,

$$ \sum_{w \in \ZZ_2^s} \sum_{\gamma \in \Gamma} I_\delta^{\gamma}(\vec P, w) \le C(d,m,n) D^k |\Gamma|.$$

By Lemma \ref{equidistw}, $ \sum_{\gamma \in \Gamma} I_\delta^{\gamma}(\vec P_\delta, w)$ is independent of $w$, and so for each $w \in \ZZ_2^s$, 

$$ \sum_{\gamma \in \Gamma} I_\delta^{\gamma}(\vec P, w) \le 2^{-s} C(d,m,n) D^k |\Gamma| \le C_n C(d,m,n) D^{k-n} |\Gamma|. $$

Since $X_s$ is compact, there is a subsequence of $\vec P_\delta$ that converges to a limit $\vec P$ as $\delta \rightarrow 0$.  By Property 4 of Lemma \ref{I_deltaprop}, we know that for each $\gamma \in \Gamma$ and $w \in \ZZ_2^s$,

$$ I^{\gamma} (\vec P, w) \le \liminf_{\delta \rightarrow 0} I_\delta^{\gamma} (\vec P_\delta, w).   $$

Summing over $\gamma \in \Gamma$, we see that for each $w \in \ZZ_2^s$,

$$ \sum_{\gamma \in \Gamma}  I^{\gamma} (\vec P, w) \le \liminf_{\delta \rightarrow 0} \sum_{\gamma \in \Gamma}  I_\delta^{\gamma} (\vec P_\delta, w) \le C_n C(d,m,n) D^{k-n} |\Gamma|. $$

In other words, each cell $O(\vec P, w)$ intersects at most $C_n C(d,m,n) D^{k-n} |\Gamma|$ varieties $\gamma \in \Gamma$.  
This is the conclusion of Theorem \ref{polypartk}.  It only remains to construct the continuous approximation $I_\delta^{\gamma} (\vec P, w)$ and check the four properties in Lemma \ref{I_deltaprop}.

\subsection{Constructing $I_\delta$} For each $\epsilon > 0$ we define a continuous function $\eta_\epsilon: \RR \rightarrow \RR$ so that

\begin{itemize}

\item If $t \le \epsilon$, $\eta_\epsilon(t) = 0$.

\item If $t \ge 2 \epsilon$, $\eta_\epsilon(t) = 1$.

\item For all $t \in \RR$, $0 \le \eta_\epsilon(t) \le 1$.

\end{itemize}

Next we define functions $\epsilon(\delta)$ and $R(\delta)$ so that as $\delta \rightarrow 0$, $\epsilon(\delta) \rightarrow 0$ slowly, and $R(\delta) \rightarrow \infty$ slowly.  We will make this more precise below.  

We write $N_\delta \gamma$ for the $\delta$-neighborhood of $\gamma$, and $(P_1, ..., P_s)$ for the components of $\vec P$.

Now we can define $I_\delta^\gamma(\vec P, w)$: 

$$ I_\delta^\gamma(\vec P, w) = \eta_\epsilon \left(     \int_{N_\delta \gamma \cap O(\vec P, w) \cap B_R} \eta_\epsilon (\min |P_i| ) \delta^{-n}            \right). $$

Since $\eta_\epsilon$ is a continuous function, the integrand is continuous in $\vec P \in X_s$.  The domain of integration is also continuous, in the sense that if $\vec P_i \rightarrow \vec P$, then the volume of the symmetric difference of $O(\vec P_i, w) \cap B_R$ and $O(\vec P, w) \cap B_R$ goes to zero.  Therefore, the integral is a continuous function of $\vec P$, and so $I_\delta^\gamma(\vec P, w)$ is a continuous function of $\vec P \in X_s$.

Since $0 \le \eta_\epsilon(y) \le 1$, it follows immediately that $0 \le I_\delta^\gamma(\vec P, w) \le 1$.

Now we consider Property 3.  Suppose that $\gamma \cap O(\vec P, w)$ is empty.  If we choose $\epsilon(\delta)$ and $R(\delta)$ carefully, then we will show that on the domain of integration $N_\delta \gamma \cap O(\vec P, w) \cap B_{R(\delta)}$, $\min |P_i| \le \epsilon$, and so $\eta_\epsilon ( \min |P_i|) = 0$.  This will show that the integral is zero and so $I_\delta^\gamma(\vec P, w) = 0$.

Let $x \in N_\delta \gamma \cap O(\vec P, w) \cap B_{R(\delta)}$.  There must be another point $\bar x \in \gamma$ with $|x - \bar x| \le \delta$.  Since $\gamma \cap O(\vec P, w)$ is empty, there must be some $i$ so that $\Sign P_i (\bar x) \not= \Sign P_i(x)$.  (To be precise, we mean that either $P_i (\bar x) = 0$ and $P_i(x) \not= 0$, or else $P_i(\bar x)$ and $P_i(x)$ are both non-zero and have opposite signs.)  Therefore, there must be a point $y$ on the closed segment from $x$ to $\bar x$ where $P_i(y) = 0$.

Now we choose $\epsilon(\delta) \rightarrow 0$ and $R(\delta) \rightarrow \infty$ slowly enough that

$$ \max_{Q \in X_s} \max_{x \in B_{R(\delta) +1}} |\nabla Q(x)| \delta < \epsilon(\delta). $$

In particular, along the segment from $y$ to $x$, we see that $|\nabla P_i| \delta < \epsilon$.  Since the segment has length at most $\delta$, and since $P_i(y) = 0$, we see that $|P_i(x)| \le \epsilon$ as desired.  This proves Property 3.

Now we consider Property 4.  Suppose that $O(\vec P, w)$ contains a point $q \in \gamma$.  Consider a sequence of numbers $\delta \rightarrow 0$.  Suppose that as $\delta \rightarrow 0$, $\vec P_\delta \rightarrow \vec P$ in $X_s$.  For all $\delta$ sufficiently small, the following things happen.  The ball $B_{\delta}(q) \subset O(\vec P_\delta, w)$.  On $B_{\delta}(q)$, $\min |P_i| \ge c > 0$ for some constant $c > 0$.  On $B_{\delta}(q)$, $\min |P_{\delta,i}| \ge c/2 > 0$.  So on $B_{\delta}(q)$, $\eta_\epsilon( \min |P_{\delta,i}| ) = 1$.  The ball $B_{R(\delta)}$ contains $B_{\delta}(q)$.  And so 

$$ \int_{N_\delta \gamma \cap O(\vec P_\delta, w) \cap B_R} \eta_\epsilon (\min |P_{\delta, i}| ) \delta^{-n}  \ge c' > 0. $$

Therefore, $I_\delta^\gamma(\vec P_\delta, w) = 1$ for all $\delta$ sufficiently small.   This proves Property 4 and finishes the proof of Lemma \ref{I_deltaprop}.

\section{Proof of Theorem \ref{buvar}}  \label{secproofbuvar}

In this section, we prove Theorem \ref{buvar}, the topological input to our argument.  
Theorem \ref{buvar} is a cousin of the Borsuk-Ulam theorem, and we will adapt one of the standard proofs of the Borsuk-Ulam theorem.

The following topology theorem, due to Brouwer, is the main tool in the proof.

\begin{theorem} \label{brouwer} (Brouwer 1909) Suppose that $X^N$ is a compact $N$-dimensional possibly with boundary.  Suppose that $f, g: X \rightarrow \RR^N$ are smooth functions which agree on the boundary $\partial X$.  Suppose that $f$ and $g$ do not vanish on $\partial X$, and suppose that 0 is a regular value for both $f$ and $g$.  Then

$$ \# f^{-1}(0) = \# g^{-1}(0) \textrm{ modulo 2}. $$

\end{theorem}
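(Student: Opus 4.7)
The plan is to prove Theorem \ref{brouwer} by the standard cobordism argument: exhibit $f^{-1}(0) \sqcup g^{-1}(0)$ as the boundary of a compact smooth $1$-manifold sitting inside $X \times [0,1]$, so that its cardinality is automatically even. First I would form the straight-line homotopy $H(x,t) = (1-t) f(x) + t g(x)$ on $X \times [0,1]$. Since $f = g$ on $\partial X$, this homotopy is constant in $t$ on $\partial X \times [0,1]$, equal to the common value $f|_{\partial X}$. Because $f$ does not vanish on $\partial X$ and $\partial X$ is compact, $|H| \ge c_0 > 0$ uniformly on $\partial X \times [0,1]$.

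Next I would perturb $H$ to make $0$ a regular value without disturbing its behavior at the endpoints or on $\partial X$. Pick a smooth cutoff $\phi : X \times [0,1] \to [0,1]$ that vanishes in a neighborhood of $X \times \{0,1\} \cup \partial X \times [0,1]$ and equals $1$ on a large compact subset of the interior, and set $H_a(x,t) = H(x,t) + \phi(x,t)\, a$ for $a \in \RR^N$. Parametric transversality (Sard) applied to the map $(x,t,a) \mapsto H_a(x,t)$ shows that for almost every $a$ the value $0$ is regular for $H_a$. Choosing such an $a$ with $|a| < c_0/2$ keeps $H_a$ bounded away from $0$ on $\partial X \times [0,1]$. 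Because $\phi \equiv 0$ near $X \times \{0,1\}$, we have $H_a = H$ there, so $H_a^{-1}(0) \cap (X \times \{0\}) = f^{-1}(0) \times \{0\}$ and similarly at $t = 1$; regularity of $H_a$ at these endpoint zeros is inherited automatically from the hypothesis that $0$ is already a regular value of $f$ and $g$, since the derivative of $H_a$ in the $X$-direction at $t = 0$ or $t = 1$ is exactly $Df$ or $Dg$.

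Finally I would apply the classification of compact $1$-manifolds. The preimage $Z := H_a^{-1}(0)$ is a compact smooth $1$-dimensional submanifold of $X \times [0,1]$ whose boundary is $(f^{-1}(0) \times \{0\}) \sqcup (g^{-1}(0) \times \{1\})$, since $Z$ misses $\partial X \times [0,1]$ entirely by the lower bound above. Any such manifold is a finite disjoint union of circles and closed arcs, so $|\partial Z|$ is even, yielding $\#f^{-1}(0) + \#g^{-1}(0) \equiv 0 \pmod 2$.

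The hardest step I expect is the perturbation: one must arrange that $0$ becomes a regular value of $H_a$ without destroying the given regularity at the ends or creating new zeros on $\partial X \times [0,1]$, and the cutoff $\phi$ is chosen precisely to enforce both constraints simultaneously. A minor subtlety is that $X \times [0,1]$ has corners along $\partial X \times \{0,1\}$, but the uniform lower bound on $|H_a|$ along $\partial X \times [0,1]$ forces $Z$ to avoid these corners, so the $1$-manifold classification applies without any corner smoothing.
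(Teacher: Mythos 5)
Your argument is correct and is the cobordism proof of Milnor's Homotopy Lemma, which is all the paper cites for this statement: the key point you identify --- that $f = g$ bounded away from $0$ on $\partial X$ keeps $H^{-1}(0)$ off $\partial X \times [0,1]$, so the $1$-manifold's boundary lies only over $t = 0,1$ and the corners of $X \times [0,1]$ never interfere --- is precisely why the paper can assert that Milnor's proof ``applies word for word'' in the boundary setting. The one technical variation is that you achieve regularity of the homotopy by perturbing $H$ with a cutoff and invoking parametric transversality (which, as you note, requires checking that $\{\phi = 0\} \cap H^{-1}(0)$ meets only points where $DH$ is already surjective, e.g.\ near $f^{-1}(0) \times \{0\}$ and $g^{-1}(0)\times\{1\}$), whereas Milnor instead perturbs the target value $0$ to a nearby regular value $z$ of $H$ and uses that $\#f^{-1}(\cdot)$ and $\#g^{-1}(\cdot)$ are locally constant near regular values; both routes are standard and equally elementary.
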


Here we write $\# f^{-1} (0)$ for the number of points in the set $f^{-1}(0)$.  Part of the conclusion of the theorem is that this number is always finite.  

The proof of Theorem \ref{brouwer} can be found in Milnor's introduction to differential topology \cite{M2}.  The result appears as the Homotopy Lemma on page 21 of \cite{M2}.  The result is stated there for the case that $X$ has no boundary, but the proof applies word for word to our setting: $X$ has a boundary, $f$ and $g$ agree on the boundary, and $f$ and $g$ don't vanish on the boundary.  
The book \cite{M2} is a very readable and engaging introduction to the subject.  

Let us recall the definition of a regular value.  For any $x \in X$, the derivative $df_x$ is a linear map from the tangent space $T_x X$ to $\RR^N$.  A point $y \in \RR^N$ is a regular point if, for every $x \in f^{-1}(y)$, $df_x$ is surjective.  Here is a simple example to illustrate the definition.  If $f: [-1,1] \rightarrow \RR$ is the map $f(x) = x^2$, then $df_x: \RR \rightarrow \RR$ is the linear map $ df_x(v) = 2x v$.  The map $df_x$ is surjective if and only if $x \not= 0$.  Now $f(0) = 0$, and so 0 is not a regular value of $f$, but every other $y \in \RR$ is a regular value of $f$.  We remark that if $f^{-1}(y)$ is empty, then $y$ is a regular value of $f$.  

Now let us try to give a little intuition for this theorem by considering low-dimensional examples.  Suppose that $f,g: [-1, 1] \rightarrow \RR$ with boundary values $f( \pm 1) = g(\pm 1) = 1$.  Suppose that $f$ is simply the function 1.  The function $f$ does not vanish at all and so $\# f^{-1}(0) = 0$.  Now let $g$ be the function $a x^2 + (1-a)$ for some constant $c$.  If $a < 1$, then $\# g^{-1}(0) = 0 = \# f^{-1}(0)$.  If $a > 1$, then $\# g^{-1}(0) = 2$.  In this case, $\# f^{-1}(0)$ is not equal to $\# g^{-1}(0)$, but they are equal modulo 2.  Now consider the case $a=1$.  In this case, $\# g^{-1}(0)$ is 1, which does not agree with $\# f^{-1}(0)$ modulo 2.  But if $a=1$, then 
$g(x) = x^2$ is the function we considered in the last paragraph.  In this case, 0 is not a regular value of $g$, and so the Theorem does not apply.  The problem is that the graph of the function $g$ is tangent to the $x$-axis at $x=0$ instead of crossing the $x$-axis.  When we say that 0 is a regular value of $g$, we rule out this problem with tangency.  Hopefully this discussion gives some intuition for the role of regular values in the Theorem.

Another basic point about regular values is that non-regular values are rare.  Sard's theorem states that for a smooth map $f: X \rightarrow \RR^N$, almost every $y \in \RR^N$ is a regular value.  (See Chapter 2 of \cite{M2}.)  Similarly, any smooth map $f: X \rightarrow \RR^N$ can be slightly perturbed to a map $\tilde f$ so that $0$ is a regular value of $\tilde f$ (see Section 2.3 of \cite{GP}).

The 1-dimensional case of Theorem \ref{brouwer} is more elementary than the general case - it follows from the intermediate value theorem.  Brouwer had the important insight that the same statement holds for any dimension $N$.  He used this insight to prove some important results in topology, including the Brouwer fixed point theorem and the topological invariance of dimension.

Here is a simple corollary of Theorem \ref{brouwer}, which is related to Theorem \ref{buvar}.  Suppose that $X$ is the closed unit ball $\bar B^N(1)$, and suppose that $g: X \rightarrow \RR^N$ is the identity.  Suppose that $f$ is a smooth map that agrees with $g$ on $\partial B^N(1)$.  Then it follows from the Theorem that $f$ vanishes at some point in $B^N$.  Indeed, suppose that $f^{-1}(0)$ was empty.  Then 0 would be a regular value of both $f$ and $g$.  But $\# f^{-1}(0) = 0$, and $\# g^{-1}(0) = 1$.  This contradiction shows that $f$ must vanish somewhere in the unit ball.  The proof of Theorem \ref{buvar} is based on a similar argument, but instead of using the boundary condition $f|_{\partial X} =g$, we instead use Condition \ref{f_vantipode}.

Now we begin the proof of Theorem \ref{buvar}.  Let us recall the setup.  Recall that $X$ is the product of spheres $X = \prod_{j=1}^s S^{2^{j-1}}$.  We note that $\Dim X = N = 2^s -1$.  We write a point $x \in X$ as $(x_1, ..., x_s)$ with $x_j \in S^{2^{j-1}}$.  We define the coordinate-flipping operation $Fl_j: X \rightarrow X$ by changing the sign of the $j^{th}$ coordinate:

$$ Fl_j( x_1, ..., x_{j-1}, x_j, x_{j+1}, ..., x_s) = ( x_1, ..., x_{j-1}, - x_j, x_{j+1}, ..., x_s). $$

For each $v \in \ZZ_2^s \setminus \{ 0 \}$, suppose that $f_v: X \rightarrow \RR$ is a continuous function that obeys the following antipodal-type condition \ref{f_vantipode}:

\begin{equation*}
 f_v (Fl_j x) = (-1)^{v_j} f_v (x) \textrm{ for all } j = 1, ..., s. \eqno{(\ref{f_vantipode})}
\end{equation*}

\noindent Note that we have $N=2^s - 1$ functions $f_v: X \rightarrow \RR$.  We let $f: X \rightarrow \RR^N$ be the function with coordinates $f_v$.  We want to conclude that $f^{-1}(0)$ is non-empty.

We will construct below a smooth function $g: X \rightarrow \RR^N$ obeying Condition \ref{f_vantipode}, so that 0 is a regular value of $g$, and so that $\# g^{-1}(0) = 2^s$.  The number $2^s$ here has to do with the symmetries coming from Condition \ref{f_vantipode}: if $g$ obeys Condition \ref{f_vantipode} and $g(x_1, ..., x_s) = 0$, then it follows that $g( \pm x_1 , \pm x_2, ..., \pm x_s) = 0$.  

Suppose that $H \subset X$ is a product of (closed) hemispheres.  If $H_j$ is a hemisphere of $S^{2^{j-1}}$, then $H = \prod_{j=1}^s H_j$.  If $x = (x_1, ..., x_s) \in X$, and we consider the $2^s$ points $(\pm x_1, ..., \pm x_s)$, then as long as none of these points lie on $\partial H$, exactly one of them lies in $H$.  So if $f$ obeys Condition \ref{f_vantipode}, and if $f^{-1}(0) \cap \partial H$ is empty, then

$$ \# f^{-1}(0) = 2^s \# ( f^{-1}(0) \cap H ) . $$

\noindent In particular, for a generic $H \subset X$, $\# ( g^{-1}(0) \cap H ) = 1$.  

To see how Theorem \ref{brouwer} is relevant, suppose that $h: X \rightarrow \RR^N$ is a smooth function obeying Condition \ref{f_vantipode} and so that 0 is a regular value of $h$, and so that $h = g$ on $\partial H$.  By Theorem \ref{brouwer}, $\# ( h^{-1} (0) \cap H)$ is odd, and so $\# h^{-1}(0)$ is an odd multiple of $2^s$.  In particular, $h^{-1}(0)$ is not empty.  Now not every function $f$ obeying Condition \ref{f_vantipode} agrees with $g$ on $\partial H$, but using Theorem \ref{brouwer} repeatedly, we can prove Theorem \ref{buvar}.

So let us suppose that $f: X \rightarrow \RR^N$ is a continuous function obeying Condition \ref{f_vantipode}, and suppose that $f^{-1}(0)$ is empty.  We can approximate $f$ by a smooth function $f_1: X \rightarrow \RR^N$ which still obeys Condition \ref{f_vantipode} and $f_1^{-1}(0)$ is still empty.  Since $f_1^{-1}(0)$ is empty, 0 is a regular value of $f_1$.

We will find a sequence of $H_i$ and maps $f_i: X \rightarrow \RR^N$ so that 

\begin{enumerate}

\item Each $H_i$ is a product of hemispheres as described above.

\item $f_{i+1}$ agrees with $f_i$ on $\partial H_i$.

\item Each $f_i$ obeys Condition \ref{f_vantipode}.

\item 0 is a regular value of each $f_i$, and the function $f_i$ does not vanish on $\partial H_i$.

\item For some large $i$, $f_i = g$.

\end{enumerate}

Applying Theorem \ref{brouwer}, we see that $\# (f_i^{-1}(0) \cap H_i) = \# (f_{i+1}^{-1}(0) \cap H_i)$ modulo 2.  Therefore,

$$ \frac{ \# f_i^{-1}(0)} {2^s} =  \frac{ \# f_{i+1}^{-1}(0)} {2^s} \textrm{ modulo 2}. $$

\noindent Since $\# g^{-1}(0) = 2^s$, we see that $\# f_1^{-1}(0)$ is an odd multiple of $2^s$, and in particular $f_1^{-1}(0)$ is not empty.

To finish the proof, it remains to construct the maps $f_i$, and to construct the map $g$.  Constructing the maps $f_i$ is straightforward.  We know that $f_i^{-1}(0)$ is a finite set.  Pick a product of hemispheres $H_i$ so that $\partial H_i$ does not intersect $f_i^{-1}(0)$.  Now we define $f_{i+1}$ on $H_i$ as follows.  We define open sets $\partial H_i \subset U_1 \subset U_2 \subset H_i$, where $U_2$ is a small neighborhood of $\partial H_i$.  We let $f_{i+1}$ agree with $g$ on $H_i \setminus U_2$, and we let it agree with $f_i$ on $U_1$.  On the region $U_2 \setminus U_1$, we define $f_{i+1}$ in a smooth way so that 0 is a regular value of $f_{i+1}$.  (It is always possible to do this.  In fact, given any smooth extension $f_{i+1, 0}$, we can always slightly perturb $f_{i+1,0}$ in a small neighborhood of $U_2 \setminus U_1$ so that 0 will become a regular value -- cf. the Extension Theorem on page 72 of \cite{GP}.)  We have now defined $f_{i+1}$ on $H_i$.  Since $f_{i+1} = f_i$ on a neighborhood of $\partial H_i$, $f_{i+1}$ obeys Condition \ref{f_vantipode} on $\partial H_i$.  Therefore, we can extend $f_{i+1}$ to all of $X$ in a way that obeys Condition \ref{f_vantipode}.  We have now defined $f_{i+1}$ and we see that it has properties 1-4.  Also $f_{i+1}$ agrees with $g$ except on a small neighborhood of $H_1 \cap H_2 \cap ... \cap H_i$.  For a large value of $i$, we can arrange that $H_1 \cap H_2 \cap ... \cap H_i$ is empty, and so $f_{i+1} = g$.  

It just remains to construct the function $g: X \rightarrow \RR^{2^s -1}$.  We need $g$ to be a smooth function obeying the antipodal condition \ref{f_vantipode} and so that:

\begin{itemize}

\item $g$ vanishes at exactly $2^s$ points of $x$.

\item At each point $x$ where $g$ vanishes, $dg_x: T_x X_s \rightarrow \RR^{2^s-1}$ is surjective.

\end{itemize}

A point $x \in X_s$ has the form $x = (x_1, ..., x_s)$ with $x_j \in S^{2^{j-1}} \subset \RR^{2^{j-1} + 1}$.  For each $j$, we will choose coordinates on $\RR^{2^{j-1}+1}$.  We will write $g$ in those coordinates.  Our function $g$ will have the form:

$$ g_v(x) = \prod_{j: v_j = 1} (\textrm {one of the coordinates of $x_j$}) . $$

This form guarantees that $g_v( Fl_j x) = (-1)^{v_j} g_v(x)$.

Here is a choice of coordinates that allows us to make a clean analysis of the situation.  
For each $j = 1, ..., s$, let $t_j$ be one of the coordinates on $\RR^{2^{j-1} + 1}$.  We still have to give names to $2^{j-1}$ other coordinates on $\RR^{2^{j-1}+1}$.  
For each $v \in \ZZ_2^s \setminus \{ 0 \}$, let $j(v) := \max \{ j | v_j = 1\}$.  
For each $v \in\ZZ_2^s \setminus \{ 0 \}$, let $x_v$ be a coordinate on $\RR^{2^{j-1}+1}$.  
There are $2^{j-1}$ different $v \in \ZZ_2^s \setminus \{ 0 \}$ with $j(v) = j$, so we get $2^{j-1}$ coordinates on $\RR^{2^{j-1}+1}$.  

For example, if $s = 3$, then the coordinates are as follows.

When $j=1$, the coordinates on $\RR^{2^{1-1} + 1} = \RR^2$ are $t_1$ and $x_{(1,0,0)}$.  

When $j=2$, the coordinates on $\RR^{2^{2-1} +1} = \RR^3$ are $t_2$ and $x_{(0,1,0)}$ and $x_{(1,1,0)}$.

When $j=3$, the coordinates on $\RR^{2^{3-1} +1} = \RR^5$ are $t_2$ and $x_{(0,0,1)}$,$x_{(1,0,1)}$, $x_{(0,1,1)}$, and $x_{(1,1,1)}$.

With these coordinates, we can define $g_v$:

\begin{equation} \label{f_vformula} g_v(x) = x_v \left( \prod_{j: v_j = 1 \textrm{ and } j < j(v)} t_j \right).
\end{equation}

We claim that if $g_v (x) = 0$ for all $v \in \ZZ_2^s \setminus \{ 0 \}$, then $x_v = 0$ for all $v$, and $t_j \not=0$ for all $j$.  We prove this by induction on $j$, starting with $j=1$.  For $j=1$, we have two coordinates on $\RR^{2^{j-1}+1} = \RR^2$.  These are $t_1$, and $x_{e_1}$ where $e_1 = (1, 0, ..., 0) \in \ZZ_2^s$.  Now $g_{e_1}(x) = x_{e_1} = 0$.  Since $x_{e_1} = 0$ and $(t_1, x_{e_1}) \in S^1$, we have $t_1 \not=0$.  This proves the case $j=1$, giving the base case for the induction.  Suppose that $x_v = 0$ for all $v$ with $j(v) < j_0$, and $t_j \not= 0$ for all $j < j_0$.  Next we will prove that $x_v = 0$ for all $v$ with $j(v) = j_0$.  Suppose that $j(v) = j_0$.  By equation \ref{f_vformula}, we see that $g_v(x)$ is equal to $x_v$ times a product of some $t_j$'s with $j < j_0$.  Since these $t_j$'s are all non-zero, we have $x_v = 0$.  But now $(t_{j_0}, 0, ...., 0) \in S^{2^{j-1}} \subset \RR^{2^{j-1} + 1}$, and so $t_{j_0} \not= 0$.  

So the set $\{ x \in X | g_v(x) = 0 \textrm{ for all } v \}$ is the set of points with coordinates $t_j = \pm 1$ for all $j$ and $x_v = 0$ for all $v$.  The number of points in this set is $2^s$.

Now we have to check that 0 is a regular value for $g$.  Let $p$ be a point of $g^{-1}(0)$.  At the point $p$, $t_j = \pm 1$ for all $j$ and $x_v = 0$ for all $v$.   The tangent space $T_p X_s$ is the plane $t_j = 0$ for all $j$.  This plane has coordinates $x_v$.  In these coordinates, the derivative of $g$ has a very simple form.  If $v \not= v'$, then $\frac{ \partial g_v }{\partial x_{v'}} = 0$.  If $v = v'$, then $\frac{ \partial g_v }{\partial x_{v'}} = \prod_{j: v_j = 1 \textrm{ and } j < j(v)} t_j  = \pm 1$.  In short, the matrix $dg_p$ is a diagonal matrix with diagonal entries $\pm 1$.  Therefore, $dg_p$ is surjective, and so 0 is a regular value of $g$.

\end{document}